\def\uHom{\underline{\Hom}}
\def\coker{\operatorname{coker}}
\def\image{\operatorname{im}}
\def\coimage{\operatorname{coim}}
\def\inj{\operatorname{inj}}
\def\Bilin{\operatorname{Bilin}}
\def\Biext{\operatorname{Biext}}
\def\Hom{\operatorname{Hom}}
\def\pip{\operatorname{pip}}
\def\copip{\operatorname{copip}}
\mathchardef\mhyphen="2D
\def\mh{\mhyphen}
\newtheorem{theorem}{Theorem}
\newtheorem{proposition}[theorem]{Proposition}
\theoremstyle{remark}
\theoremstyle{definition}
\newtheorem{example}[theorem]{Example}
\begin{document}

\begin{abstract}
We describe some of the basic properties of the $2$-category of $2$-term complexes in an abelian category, using butterflies as morphisms.
\end{abstract}

\subjclass{18N10,18E10,18A05}

\title{The $2$-category of $2$-term complexes}
\author{Jonathan Wise}
\date{\today}
\maketitle

\section{Introduction}

A Picard category, or strictly commutative $2$-group, is a category equipped with an invertible operation that is associative and commutative only up to specified isomorphisms (which themselves satisfy additional compatibility properties).  Picard categories were defined by Deligne \cite[1.4]{sga4-XVIII} in order to give an elegant description of the self-duality of the Picard group of a smooth and proper algebraic curve.  Deligne proves that Picard categories are equivalent to a $2$-category structure on $2$-term complexes of abelian groups (as well as the natural generalization for stacks of such)~\cite[Corollaire~1.4.17]{sga4-XVIII}.  However, from the point of view of complexes, the description of the $2$-category structure is somewhat indirect:  it arises either by transport of structure from the $2$-category structure on Picard categories, which requires a long list of axioms and tedious verifications; or it requires replacing complexes by complexes of injectives replacement in order to make use of the homotopy category.

The former approach does not generalize well to say, Picard stacks with action of a ring, which require an even longer list of axioms.  The latter approach does generalize (assuming enough injectives) but is still, perhaps, a little unpleasant aesthetically.  Fortunately, the $2$-category structure can be described directly using \emph{butterflies}.  I learned about these from Behrang Noohi, who used them in collaboration with Aldovandi to study $2$-groups in the noncommutative setting~\cite{Aldrovandi-Noohi-1,Aldrovandi-Noohi-2}; in the commutative setting, the structure seems to have originally been discovered by Grothendieck~\cite{sga7-VII}.  The aim of this note is to show how one can build up some of the basic theory directly with butterflies, requiring neither resolution by injectives, nor the extended system of axioms of Picard categories.

\subsection{Acknowledgements}

The author was supported by a Simon's Collaboration Grant, Award ID 636210.

\section{Butterflies}

Let $\mathscr C$ be a stack of abelian categories over a topos $X$.  A \emph{local $2$-object} of $\mathscr C$ is a $2$-term complex, concentrated in degrees $[-1,0]$.  A morphism of $2$-objects, from $E^\bullet$ to $F^\bullet$ is a \emph{butterfly}:  a commutative diagram~\eqref{eqn:33}, where the NE-SW diagonal is exact and $pj = 0$.
\begin{equation} \label{eqn:33} \vcenter{ \xymatrix{
 &&&& 0 \\
& E^{-1} \ar[dr]^j \ar[rr]^d && E \ar[ur] \\
&& Y \ar[dr]^p \ar[ur]^q \\
& F^{-1} \ar[ur]^{i} \ar[rr]^{-d} && F^0  \\
 0 \ar[ur] &&&& 
}} \end{equation}

Butterflies naturally have the structure of a groupoid, in which morphisms are morphisms on the $Y$ term, commuting with the rest of the diagram.  This gives $2$-term complexes the structure of a $2$-category in which all $2$-morphisms are invertible.  Baer sum gives this groupoid the structure of a strictly commutative $2$-group.

The \emph{identity} butterfly of $E^\bullet$ is the diagram~\eqref{eqn:33} where $Y = E^0 \oplus E^{-1}$ and $i$, $j$, $p$, and $q$ are given as follows:
\begin{align*}
i & = \begin{pmatrix} 0 \\ 1 \end{pmatrix} & j & = \begin{pmatrix} d \\ 1 \end{pmatrix}  \\
p & = \begin{pmatrix} 1 & -d  \end{pmatrix} & q & = \begin{pmatrix} 1 & 0 \end{pmatrix}
\end{align*}

We note that if $F^{-1} = 0$ then a morphism $E^\bullet \to F^\bullet$ is the same as a morphism $E^0 \to F^0$ that restricts to $0$ on $E^{-1}$.  In particular, $\mathscr C$ is fully faithfully embedded in the $2$-category of $2$-objects of $\mathscr C$ as the complexes concentrated in degree~$0$.

More generally, any morphism of complexes $\varphi^\bullet : E^\bullet \to F^\bullet$ induces a butterfly with $Y = E^0 \oplus F^{-1}$.  The diagram~\eqref{eqn:33} is given as follows:
\begin{align}
i & = \begin{pmatrix} 0 \\ 1 \end{pmatrix} & j & = \begin{pmatrix} d \\ \varphi^{-1} \end{pmatrix}  \label{eqn:4} \\
p & = \begin{pmatrix} \varphi^0 & -d \end{pmatrix} & q & = \begin{pmatrix} 1 & 0 \end{pmatrix} \notag
\end{align}
Conversely, if the SW-NE exact sequence of diagram~\eqref{eqn:33} is split, the choice of a splitting permits us to represent the butterfly $E^\bullet \to F^\bullet$ as a morphism of complexes.

\section{Composition of butterflies}
\label{sec:composition}

The composition of butterflies $Z : F^\bullet \to G^\bullet$ and $Y : E^\bullet \to F^\bullet$ is given by the homology $Z \circ Y$ of the complex~\eqref{eqn:35}:
\begin{equation} \label{eqn:35}
F^{-1} \xrightarrow{\bigl( \begin{smallmatrix} i \\ -j \end{smallmatrix} \bigr)} Y \oplus Z \xrightarrow{( \begin{smallmatrix} -p & q \end{smallmatrix} )} F^0
\end{equation}
The maps $E^{-1} \to Z \circ Y \to G^0$ are induced from $\begin{pmatrix} j \\ 0 \end{pmatrix} : E^{-1} \to Y \oplus Z$ and $\begin{pmatrix} 0 & p \end{pmatrix} : Y \oplus Z \to G^0$.  The diagonal exact sequence is induced from~\eqref{eqn:36}:
\begin{equation} \label{eqn:36}
0 \to F^{-1} \oplus G^{-1} \xrightarrow{ \bigl( \begin{smallmatrix} i & 0 \\ -j & i \end{smallmatrix} \bigr) } Y \oplus Z \xrightarrow{ \bigl( \begin{smallmatrix} q & 0 \\ -p & q \end{smallmatrix} \bigr) } E^0 \oplus F^0 \to 0
\end{equation}

Every $2$-term complex has a unique (up to unique isomorphism) butterfly to and from the zero complex.  Composition of the butterflies $E^\bullet \to 0 \to F^\bullet$ gives a butterfly~\eqref{eqn:33} with $Y = E^0 \oplus F^{-1}$ and $i$, $j$, $p$, and $q$ given as follows:
\begin{align*}
i & = \begin{pmatrix} 0 \\ 1 \end{pmatrix} & j & = \begin{pmatrix} d \\ 0 \end{pmatrix}  \\
p & = \begin{pmatrix} 0 & -d \end{pmatrix} & q & = \begin{pmatrix} 1 & 0 \end{pmatrix}
\end{align*}

Suppose we have a sequence of morphisms of butterflies:
\begin{equation}
D^\bullet \xrightarrow{X} E^\bullet \xrightarrow{Y} F^\bullet \xrightarrow{Z} G^\bullet
\end{equation}
The compositions $Z \circ (Y \circ X)$ and $(Z \circ Y) \circ X$ both arise as the homology of the following complex:
\begin{equation*}
E^{-1} \oplus F^{-1} \xrightarrow{ \Bigl( \begin{smallmatrix} i & 0 \\ -j & i \\ 0 & -j \end{smallmatrix} \Bigr)} X \oplus Y \oplus Z \xrightarrow{ \bigl( \begin{smallmatrix} -p & q & 0 \\ 0 & -p & q \end{smallmatrix} \bigr) } E^0 \oplus F^0
\end{equation*}

\section{Splitting of compositions}
\label{sec:splitting}

\begin{proposition} \label{prop:splitting}
Consider a diagram of solid arrows~\eqref{eqn:69} with exact rows, in which the central square anticommutes:
\begin{equation} \label{eqn:69} \vcenter{\xymatrix{
& 0 \ar[r] & F^{-1} \ar[r]^{i} \ar[d]_j  & Y \ar[r]^q \ar[d]^p \ar@{-->}[dl]_{-\varphi}^\varphi & E^0 \ar[r] & 0 \\
0 \ar[r] & G^{-1} \ar[r]^{i} & Z \ar[r]_q & F^0 \ar[r] & 0 
}} \end{equation}
Dashed arrows $\phi$ completing the diagram are in bijection with splittings of the exact sequence~\eqref{eqn:70}, with $Z \circ Y$ defined as the homology of the middle term of~\eqref{eqn:35}:
\begin{equation} \label{eqn:70}
0 \to G^{-1} \to Z \circ Y \to E^0 \to 0
\end{equation}
\end{proposition}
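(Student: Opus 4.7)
The plan is to exhibit mutually inverse constructions between dashed arrows $\varphi$ completing~\eqref{eqn:69} and splittings of~\eqref{eqn:70}. Fix sign conventions so that $\varphi$ completes the diagram precisely when $\varphi i = j$ and $q \varphi = -p$; these are jointly solvable thanks to the hypothesis $p i + q j = 0$ (anticommutativity of the central square).

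\textbf{Forward direction.} Given such a $\varphi$, I would define $t_\varphi : Y \to Y \oplus Z$ by $y \mapsto (y, -\varphi(y))$. The relation $q\varphi = -p$ shows that $t_\varphi$ factors through $\ker\bigl(\begin{smallmatrix} -p & q\end{smallmatrix}\bigr)$, and $\varphi i = j$ shows that the composite $t_\varphi \circ i$ coincides with $\bigl(\begin{smallmatrix} i \\ -j\end{smallmatrix}\bigr) : F^{-1} \to Y \oplus Z$, whose image is quotiented out in the formation of $Z \circ Y$. Hence $Y \to \ker \to Z \circ Y$ kills $\ker q = i(F^{-1})$ and descends to a morphism $s_\varphi : E^0 \to Z \circ Y$; the formula $[(y,z)] \mapsto q(y)$ for the projection $Z \circ Y \to E^0$ makes $s_\varphi$ a section of~\eqref{eqn:70}.

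\textbf{Reverse direction.} The description of $Z \circ Y$ as the quotient of $Y \times_{F^0} Z$ by $F^{-1}$ yields a canonical identification of the pullback of~\eqref{eqn:70} along $q : Y \to E^0$ with $0 \to G^{-1} \to Y \times_{F^0} Z \to Y \to 0$, the latter being the pullback of the bottom row of~\eqref{eqn:69} along $p : Y \to F^0$. A section of~\eqref{eqn:70} then pulls back to a section $\sigma : Y \to Y \times_{F^0} Z$, which is the graph of a unique morphism $\varphi : Y \to Z$ satisfying $q \varphi = -p$; the descent condition on $\sigma$ becomes $\varphi i = j$ upon restriction to $i(F^{-1})$.

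The two constructions are visibly inverse. I expect the most delicate step to be the canonical identification $Y \times_{E^0}(Z \circ Y) \cong Y \times_{F^0} Z$: it requires using that $F^{-1} = \ker q$ to normalize the first coordinate of representatives of classes in $Z \circ Y$, and the signs introduced by $\bigl(\begin{smallmatrix} i \\ -j\end{smallmatrix}\bigr)$ must be propagated carefully.
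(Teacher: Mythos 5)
Your argument is correct, and both directions check out against the sign conventions of \eqref{eqn:35} and \eqref{eqn:36}: the map $t_\varphi(y) = (y,-\varphi(y))$ does land in $\ker\bigl(\begin{smallmatrix} -p & q \end{smallmatrix}\bigr)$ when $q\varphi = -p$, and $t_\varphi i = \bigl(\begin{smallmatrix} i \\ -j \end{smallmatrix}\bigr)$ when $\varphi i = j$, so the descent to a section of \eqref{eqn:70} works; the identification $Y \times_{E^0} (Z\circ Y) \simeq Y\times_{F^0} Z$ that you flag as the delicate point is indeed an isomorphism (injectivity uses that $i$ is monic, surjectivity uses that $F^{-1} = \ker q$ lets you normalize the first coordinate). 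The route is genuinely different from the paper's, which instead views the shear matrix $\bigl(\begin{smallmatrix} 1 & 0 \\ \varphi & 1 \end{smallmatrix}\bigr)$ as an isomorphism between two three-row diagrams \eqref{eqn:44} and \eqref{eqn:45} and passes to vertical homology to identify the extension \eqref{eqn:46} with the split extension \eqref{eqn:47}. Your forward construction is essentially the restriction of that shear to the summand $Y$, so the two proofs agree there; but the paper only exhibits the map from completions $\varphi$ to splittings and leaves the inverse implicit (both sets are torsors under $\Hom(E^0, G^{-1})$ and the map is equivariant, which is presumably the intended justification of bijectivity). Your reverse direction, recovering $\varphi$ as the graph of a section after base change along $q$, supplies that missing half explicitly, at the cost of an element-level argument that one would want to rephrase via generalized elements or the embedding theorem to work in a general abelian category. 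One small wording issue: the anticommutativity $pi = -qj$ does not make the conditions $\varphi i = j$ and $q\varphi = -p$ ``jointly solvable'' --- a completion $\varphi$ need not exist, which is the content of the proposition --- it only makes the two conditions mutually consistent on the image of $i$.
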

\begin{proof}
The matrix $\bigl( \begin{smallmatrix} 1 & 0 \\ \varphi & 1 \end{smallmatrix} \bigr)$ can be seen as an isomorphism between the diagrams~\eqref{eqn:44} and~\eqref{eqn:45}:
\begin{gather} \label{eqn:44} \vcenter{ \xymatrix{
&& F^{-1} \ar@{=}[r] \ar[d]_{\bigl( \begin{smallmatrix} i \\ j \end{smallmatrix} \bigr)}& F^{-1} \ar[d]^i  \\
0 \ar[r] & Z \ar[d]_q \ar[r]_-{\bigl( \begin{smallmatrix} 0 \\ 1 \end{smallmatrix} \bigr)} & Y \oplus Z \ar[r]^-{( \begin{smallmatrix} 1 & 0 \end{smallmatrix})}  \ar[d]^{( \begin{smallmatrix} p & q \end{smallmatrix} )} & Y \ar[r] & 0 \\
& F^0 \ar@{=}[r] & F^0 
}} \\ \label{eqn:45} \vcenter{ \xymatrix{
&& F^{-1} \ar@{=}[r] \ar[d]_{\bigl( \begin{smallmatrix} i \\ 0 \end{smallmatrix} \bigr)}& F^{-1} \ar[d]^i  \\
0 \ar[r] & Z \ar[d]_q \ar[r]_-{\bigl( \begin{smallmatrix} 0 \\ 1 \end{smallmatrix} \bigr)} & Y \oplus Z \ar[r]^-{( \begin{smallmatrix} 1 & 0 \end{smallmatrix})} \ar[d]^{( \begin{smallmatrix} 0 & q \end{smallmatrix} )} & Y \ar[r] & 0 \\
& F^0 \ar@{=}[r] & F^0
}} \end{gather}
Passing to homology in the vertical direction, such an isomorphism induces an isomorphism between the extensions~\eqref{eqn:46} and~\eqref{eqn:47}:
\begin{gather}
0 \to G^{-1} \to Z \circ Y \to E^0 \to 0 \label{eqn:46} \\
0 \to G^{-1} \to E^0 \oplus G^{-1} \to E^0 \to 0 \label{eqn:47}
\end{gather}
\end{proof}

If $Y : E^\bullet \to F^\bullet$ and $Z : F^\bullet \to G^\bullet$ are butterflies and $\varphi : Y \to Z$ is as in Proposition~\ref{prop:splitting} then the proposition gives an isomorphism between the butterfly $Z \circ Y$ and the butterfly $W = E^0 \oplus G^{-1} : E^\bullet \to G^\bullet$ with $i$, $j$, $p$, and $q$ defined as follows:
\begin{align*}
i & = \begin{pmatrix} 0 \\ 1 \end{pmatrix} & j & = \begin{pmatrix} d \\ i^{-1} \varphi j  \end{pmatrix} \\
p & = \begin{pmatrix} -p \varphi q^{-1} & -d \end{pmatrix} & q & = \begin{pmatrix} 1 & 0 \end{pmatrix}
\end{align*}

That is, $Z \circ Y$ is butterfly arising from the morphism  of complexes $\psi : E^\bullet \to F^\bullet$ given by the following formulas:
\begin{equation*}
\psi^{-1} = i^{-1} \varphi j \qquad \psi^0 = -p \varphi q^{-1}
\end{equation*}

\section{Composition with morphisms of complexes}
\label{sec:comp-with-morph}

Suppose that $Y : E^\bullet \to F^\bullet$ and $Z : F^\bullet \to G^\bullet$ are butterflies, with $Y$ representable by a morphism of complexes.  We choose an isomorphism $Y \simeq E^0 \oplus F^{-1}$, so that the butterfly has the formula~\eqref{eqn:4}.  With this identification, the composition $Z \circ Y$ is given by the homology of~\eqref{eqn:5}:
\begin{equation} \label{eqn:5}
F^{-1} \xrightarrow{\Bigl(\begin{smallmatrix} 0 \\ 1 \\ -j \end{smallmatrix}\Bigr)} E^0 \oplus F^{-1} \oplus Z \xrightarrow{( \begin{smallmatrix} -\varphi^0 & d & q \end{smallmatrix} )} F^0
\end{equation}
This reduces to the kernel of~\eqref{eqn:6}:
\begin{equation} \label{eqn:6}
E^0 \oplus Z \xrightarrow{ ( \begin{smallmatrix} - \varphi^0 & q \end{smallmatrix} ) } F^0
\end{equation}
In other words, $Z \circ Y = {\varphi^0}^\ast(Z)$ with $j_{Z \circ Y} = \begin{pmatrix} d \\ j \varphi^{-1} \end{pmatrix}$ and $q_{Z \circ Y} = \begin{pmatrix} 0 & q \end{pmatrix}$. 

We obtain a similar formula if it is $Z$ instead of $Y$ that is represented by a morphism of complexes $\varphi$.  In this case, $Z \circ Y$ is given by the homology of~\eqref{eqn:7}, which reduces to the cokernel of~\eqref{eqn:8}:
\begin{gather}
F^{-1} \xrightarrow{ \Bigl( \begin{smallmatrix} i \\ -d \\ -\varphi^{-1} \end{smallmatrix} \Bigr) } Y \oplus F^0 \oplus G^{-1} \xrightarrow{ ( \begin{smallmatrix} -p & 1 & 0 \end{smallmatrix} ) } F^0 \label{eqn:7} \\
F^{-1} \xrightarrow{ \bigl( \begin{smallmatrix} i \\ - \varphi^{-1} \end{smallmatrix} \bigr) } Y \oplus G^{-1} \label{eqn:8}
\end{gather}
Thus $Z \circ Y = \varphi^{-1}_\ast(Y)$ with $j_{Z \circ Y} = \begin{pmatrix} j \\ 0 \end{pmatrix}$ and $p_{Z \circ Y} = \begin{pmatrix} \varphi^0 p & -d \end{pmatrix}$.

\section{Homology}
\label{sec:homology}

Suppose that $Y : E^\bullet \to F^\bullet$ is a butterfly.  By a straightforward diagram chase, $Y$ induces morphisms $Y_\ast : H^i(E^\bullet) \to H^i(F^\bullet)$ for $i = -1, 0$.  On $H^{-1}$, we can write $Y_\ast = i^{-1} j$ and on $H^0$ we can write $Z_\ast = p q^{-1}$.  

\begin{proposition} \label{prop:homology-functor}
If $Y : E^\bullet \to F^\bullet$ and $Z : F^\bullet \to G^\bullet$ are butterflies then $(Z \circ Y)_\ast = Z_\ast \circ Y_\ast$ as maps $H^i(E^\bullet) \to H^i(G^\bullet)$.
\end{proposition}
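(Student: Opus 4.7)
The plan is to prove this by tracking cocycles explicitly through the defining complex of $Z \circ Y$, using the description of the structure maps $i_{Z\circ Y}, j_{Z\circ Y}, p_{Z\circ Y}, q_{Z\circ Y}$ that can be read off from the construction in Section~\ref{sec:composition}. Writing subscripts to distinguish the two butterflies, so that $Y$ has maps $i_Y, j_Y, p_Y, q_Y$ and $Z$ has maps $i_Z, j_Z, p_Z, q_Z$, the composition is
$$Z \circ Y = \ker \bigl( Y \oplus Z \xrightarrow{(-p_Y, q_Z)} F^0 \bigr) \big/ \operatorname{im} \bigl( F^{-1} \xrightarrow{(i_Y,-j_Z)} Y \oplus Z \bigr),$$
with $j_{Z \circ Y}(e) = (j_Y e, 0)$, $p_{Z \circ Y}(y,z) = p_Z z$, $i_{Z \circ Y}(g) = (0, i_Z g)$, $q_{Z \circ Y}(y,z) = q_Y y$.

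For $H^0$, I would carry out the following chase. Given a representative $x \in E^0$, surjectivity of $q_Y$ lets me choose $y \in Y$ with $q_Y y = x$; then since $q_Z$ is surjective, I can choose $z \in Z$ with $q_Z z = p_Y y$, so $(y,z)$ is a cocycle in $Y \oplus Z$. By definition $(Z \circ Y)_\ast [x] = p_{Z \circ Y}(y,z) = p_Z z$. But the same data computes $Z_\ast Y_\ast [x]$: applying $p_Y q_Y^{-1}$ to $x$ gives $p_Y y$, and applying $p_Z q_Z^{-1}$ to $p_Y y$ gives $p_Z z$. Both agree in $H^0(G^\bullet)$.

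For $H^{-1}$, given $e \in E^{-1}$ with $d e = 0$, I need to show $i_{Z \circ Y}^{-1} j_{Z \circ Y}(e) = Z_\ast Y_\ast[e]$. Set $f = i_Y^{-1}(j_Y e)$, which represents $Y_\ast[e]$, and $g = i_Z^{-1}(j_Z f)$, which represents $Z_\ast Y_\ast [e]$. I would then verify that $(0, i_Z g)$ and $(j_Y e, 0)$ define the same class in $Z \circ Y$, since their difference is $(j_Y e, -i_Z g) = (i_Y f, -j_Z f)$, i.e.\ lies in the image of $(i_Y,-j_Z)$. This shows $i_{Z \circ Y}(g) = j_{Z \circ Y}(e)$ in $Z \circ Y$, so $(Z \circ Y)_\ast [e] = [g] = Z_\ast Y_\ast[e]$.

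No step looks like a real obstacle; the only thing that requires care is choosing lifts consistently and checking that the defining relation of $Z \circ Y$ witnesses the equality on $H^{-1}$. One could also phrase the whole argument more economically by noting that the complex~\eqref{eqn:35} is a quasi-isomorphism to $F^\bullet$-shifted data and that $H^i$ is additive on the relevant terms, but the direct element chase above is shortest.
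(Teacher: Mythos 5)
Your proof is correct and follows essentially the same route as the paper: both arguments reduce to the observation that $\bigl(\begin{smallmatrix} i_Y \\ 0 \end{smallmatrix}\bigr)$ and $\bigl(\begin{smallmatrix} 0 \\ j_Z \end{smallmatrix}\bigr)$ agree modulo the image of $\bigl(\begin{smallmatrix} i_Y \\ -j_Z \end{smallmatrix}\bigr)$ (and dually for $H^0$), and then chase a class through the structure maps of $Z \circ Y$ read off from~\eqref{eqn:35} and~\eqref{eqn:36}. The paper packages the chase in two commutative diagrams and handles $H^0$ by duality, but the content is the same.
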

\begin{proof}
Observe first that $\begin{pmatrix} i \\ 0 \end{pmatrix} : Y \to Y \oplus Z$ and $\begin{pmatrix} 0 \\ j \end{pmatrix}$ both induce the same map $H^{-1} F^\bullet \to Z \circ Y$.  Likewise $\begin{pmatrix} p & 0 \end{pmatrix} : Y \oplus Z \to F^0$ and $\begin{pmatrix} 0 & q \end{pmatrix} : Y \oplus Z \to F^0$ both induce the same map $Z \oplus Y \to H^0 F^\bullet$.  Using these maps, we illustrate the composition on homology with one diagram for $H^0$ and another for $H^1$:
\begin{equation*} \begin{tikzcd}[row sep=large]
H^{-1} E^\bullet \ar[r,hook] \ar[d,"Y_\ast",swap] & E^{-1} \ar[r,"j_Y"] \ar[d,"j_{Z\circ Y}" near end] & Y \ar[d,"\bigl( \begin{smallmatrix} 1 \\ 0 \end{smallmatrix} \bigr)"] \\
H^{-1} F^\bullet \ar[r,"r"] \ar[d,"Z_\ast",swap] \ar[urr,"i_Y" near start] \ar[drr,"j_Z" near start, swap] & Z \circ Y \ar[r,hook]  & \displaystyle Y \mathop{\oplus}^{F^{-1}} Z \\
H^{-1} G^\bullet \ar[r,hook] & G^{-1} \ar[r,"i_Z",swap] \ar[u,"i_{Z\circ Y}" near end, swap] & Z \ar[u,"\bigl( \begin{smallmatrix} 0 \\ 1 \end{smallmatrix} \bigr)", swap]
\end{tikzcd} \qquad
\begin{tikzcd}[row sep=large,ampersand replacement=\&]
Y \ar[r,"q_Y"] \ar[drr,"p_Y" near start,swap] \& E^0 \ar[r,two heads] \& H^0 E^\bullet \ar[d,"Y_\ast"] \\
\displaystyle Y \mathop\oplus_{F^0} Z \ar[u,"{( \begin{smallmatrix} 0 & 1 \end{smallmatrix} )}"] \ar[d,"{( \begin{smallmatrix} 1 & 0 \end{smallmatrix} )}", swap] \arrow[r, two heads] \& Z \circ Y \ar[r,"s"] \ar[u,"q_{Z\circ Y}" near end, swap] \ar[d,"p_{Z \circ Y}", near end] \& H^0 F^\bullet \ar[d,"Z_\ast"] \\
Z \ar[r] \ar[urr,"q_Z",near start] \& G^0 \ar[r,two heads] \& H^0 G^\bullet
\end{tikzcd} \end{equation*}
If $\alpha \in H^{-1} E^\bullet$ then its image $Y_\ast(\alpha) \in H^{-1} F^\bullet$ is the unique $\beta$ such that $i_Y(\beta) = j_Y(\alpha)$.  By commutativity of the diagram, $r(\beta) = j_{Z\circ Y}(\alpha)$.  Similarly, $r(\beta) = i_{Z\circ Y}(\gamma)$.  As $(Z \circ Y)_\ast (\alpha)$ is the unique $\gamma$ such that $j_{Z\circ Y}(\alpha) = i_{Z\circ Y}(\gamma)$, we obtain $(Z\circ Y)_\ast (\alpha) = Z_\ast Y_\ast(\alpha)$ on $H^{-1}$, as required.  The same argument in the second diagram, applied with arrows reversed, gives the conclusion for $H^0$.
\end{proof}

\section{Invertible morphisms}

\begin{proposition}
The following properties are equivalent of $Y : E^\bullet \to F^\bullet$:
\begin{enumerate}[label=(\roman{*})]
\item \label{it:iso} the butterfly $Y$ has a $2$-sided inverse (up to isomorphism).
\item \label{it:qis} the maps $H^i(E^\bullet) \to H^i(F^\bullet)$ are isomorphisms for $i = -1, 0$;
\item \label{it:cone} the sequence~\eqref{eqn:48} is exact:
\begin{equation} \label{eqn:48}
0 \to E^{-1} \to Y \to F^0 \to 0
\end{equation}
\end{enumerate}
\end{proposition}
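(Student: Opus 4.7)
The plan is to prove the cycle (i) $\Rightarrow$ (ii) $\Rightarrow$ (iii) $\Rightarrow$ (i). The implication (i) $\Rightarrow$ (ii) is immediate from Proposition~\ref{prop:homology-functor}: applying $(-)_\ast$ to a two-sided inverse of $Y$ produces a two-sided inverse of $Y_\ast$ on each $H^i$.

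For (ii) $\Rightarrow$ (iii) I would perform a diagram chase on~\eqref{eqn:33}, using the exact NE-SW diagonal $0 \to F^{-1} \xrightarrow{i} Y \xrightarrow{q} E^0 \to 0$ together with $pj = 0$, $qj = d$, $pi = -d$, and the formulas $Y_\ast = i^{-1} j$ on $H^{-1}$ and $Y_\ast = p q^{-1}$ on $H^0$. The injectivity of $j : E^{-1} \to Y$ follows because $j(x) = 0$ forces $d(x) = qj(x) = 0$ and $Y_\ast[x] = 0$, hence $x = 0$. Surjectivity of $p : Y \to F^0$ follows by lifting $y \in F^0$ through $Y_\ast$ on $H^0$ and then correcting via $pi = -d$. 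Exactness in the middle uses both injectivity on $H^0$ (to write $q(z) = d(x)$ when $p(z) = 0$, so that $z - j(x) \in \image i$) and surjectivity on $H^{-1}$ (to absorb the resulting term $i(v)$ into a term $j(w)$).

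For (iii) $\Rightarrow$ (i) I would construct an inverse butterfly $Y^{-1} : F^\bullet \to E^\bullet$ by keeping the middle object $Y$ and swapping the two diagonals: its new NE-SW diagonal is the sequence~\eqref{eqn:48}, exact by hypothesis, and its new NW-SE diagonal $F^{-1} \xrightarrow{i} Y \xrightarrow{q} E^0$ is a complex because $qi = 0$ comes from the original exact diagonal. Sign conventions such as $i_{Y^{-1}} = j$, $j_{Y^{-1}} = i$, $p_{Y^{-1}} = -q$, $q_{Y^{-1}} = -p$ make $Y^{-1}$ a genuine butterfly. To verify $Y^{-1} \circ Y \simeq \mathrm{id}_{E^\bullet}$ I would apply the composition formula~\eqref{eqn:35}: the middle term becomes $F^{-1} \to Y \oplus Y \to F^0$ with appropriate block-matrix maps, and a change of coordinates such as $(x, y) \mapsto (x + y, x)$ decouples the complex and identifies its homology with $\ker(p) \oplus (Y / \image(i)) \cong E^{-1} \oplus E^0$ by~(iii) and the exactness of the original NE-SW diagonal. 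Tracking the induced structure maps recovers the identity butterfly from Section~2, and the symmetric argument yields $Y \circ Y^{-1} \simeq \mathrm{id}_{F^\bullet}$.

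The hardest step will be (iii) $\Rightarrow$ (i), because of the sign bookkeeping: one must choose signs in $Y^{-1}$ carefully so that its triangle identities hold (especially $p_{Y^{-1}} i_{Y^{-1}} = -d_E$), and then match the output of~\eqref{eqn:35} to the identity butterfly formulas of Section~2 on the nose rather than merely producing an equivalent butterfly. The chase for (ii) $\Rightarrow$ (iii) is routine but must be carried out symmetrically at both ends of~\eqref{eqn:48}.
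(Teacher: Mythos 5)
Your proof is correct and follows the same skeleton as the paper's: (i)$\Rightarrow$(ii) by functoriality of homology (Proposition~\ref{prop:homology-functor}), (ii)$\Rightarrow$(iii) by the diagram chase you outline (which the paper leaves entirely to the reader, so your sketch is if anything more complete), and (iii)$\Rightarrow$(i) by reflecting the butterfly across the horizontal with a consistent choice of signs. The one place you diverge is the verification that $Y^{-1} \circ Y \simeq \mathrm{id}_{E^\bullet}$: you propose to compute the homology of~\eqref{eqn:35} directly via a change of coordinates on $Y \oplus Y$ and then track the induced structure maps. That computation does go through, but the paper short-circuits it: with your conventions the identity map of the underlying object $Y$ satisfies the hypotheses of Proposition~\ref{prop:splitting} (one triangle commutes, the other anticommutes), so the composite is automatically represented by the morphism of complexes $\psi^{-1} = i_{Y^{-1}}^{-1}\,\varphi\, j_Y = j^{-1} j = 1$ and $\psi^0 = -p_{Y^{-1}}\,\varphi\, q_Y^{-1} = q q^{-1} = 1$, i.e., by $\mathrm{id}_{E^\bullet}$, and symmetrically for $Y \circ Y^{-1}$. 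This disposes of exactly the sign bookkeeping you flag as the hardest step, and it also relieves you of matching the identity butterfly ``on the nose,'' which is more than the statement requires since the inverse is only asked for up to isomorphism. I would recommend routing the last step through Proposition~\ref{prop:splitting}; otherwise no changes are needed.
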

\begin{proof}
Certainly an isomorphism will induce isomorphisms on homology, by functoriality (Proposition~\ref{prop:homology-functor}), so \ref{it:iso} implies \ref{it:qis}.  That \ref{it:qis} implies \ref{it:cone} can be verified by a diagram chase.  Finally, if~\eqref{eqn:48} is exact then the inverse $Y'$ of $Y$ is obtained by reflecting $Y$ across the horizontal and inverting $i$ and $q$.  The identity map $Y \to Y$ therefore gives a splitting as in Proposition~\ref{prop:splitting}, and we obtain $j = \bigl( \begin{smallmatrix} d \\ 1 \end{smallmatrix} \bigr)$ and $p = \begin{pmatrix} 1 & -d \end{pmatrix}$.  The compositions $Y' \circ Y$ and $Y \circ Y'$ are therefore identity maps.
\end{proof}

\section{Kernels and cokernels}
\label{sec:kernels}

The kernel of~\eqref{eqn:33} is the complex~\eqref{eqn:39}:
\begin{equation} \label{eqn:39}
\ker(Y) = [ E^{-1} \xrightarrow j \ker(p) ]
\end{equation}
The morphism of complexes given by the restriction of $q$ induces a map $\ker(Y) \to E^\bullet$.  We note that $\ker(Y)$ has the universal property of a kernel:

\begin{proposition}
Let $Z : F^\bullet \to G^\bullet$ be a butterfly.  For any complex $E^\bullet$, there an equivalence of groupoids between the butterflies $E^\bullet \to \ker(Z)$ and the pairs $(Y, \psi)$ where $Y : E^\bullet \to F^\bullet$ is a butterfly and $\psi : Z \circ Y \simeq 0$ is an isomorphism of butterflies.
\end{proposition}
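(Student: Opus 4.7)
The plan is to produce mutually inverse functors $\Phi$ and $\Psi$ between the two groupoids by matching the data on central objects. A butterfly $W : E^\bullet \to \ker(Z)$ is specified by a central object $W$ together with $i_W : F^{-1} \to W$, $j_W : E^{-1} \to W$, $q_W : W \to E^0$, and $p_W : W \to \ker(p_Z)$, whereas a butterfly $Y : E^\bullet \to F^\bullet$ has the analogous data but with $p_Y$ landing in $F^0$. The forward functor sends $W$ to $(Y, \psi)$ where $Y = \iota \circ W$ for the canonical morphism of complexes $\iota : \ker(Z) \to F^\bullet$; concretely $Y$ has the same central object, $i$, $j$, and $q$ as $W$, and $p_Y := q_Z \circ p_W$.

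For the trivialization $\psi$, I take $\varphi := p_W$ regarded as a map $Y \to Z$ via the inclusion $\ker(p_Z) \hookrightarrow Z$, and verify four conditions: (a)~$\varphi i_Y = -j_Z$ and (c)~$\varphi j_Y = 0$ follow from the butterfly axioms of $W$ together with the fact that the differential of $\ker(Z)$ is the restriction of $j_Z$; (b)~$q_Z \varphi = p_Y$ holds by definition of $p_Y$; and (d)~$p_Z \varphi = 0$ holds because $\varphi$ factors through $\ker(p_Z)$. Conditions (a) and (b) are exactly the hypotheses of Proposition~\ref{prop:splitting}, which produces an isomorphism between $Z \circ Y$ and the butterfly associated to the morphism of complexes with components $\psi^{-1} = i_Z^{-1} \varphi j_Y$ and $\psi^0 = -p_Z \varphi q_Y^{-1}$. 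Conditions (c) and (d) force both of these to vanish, so this target butterfly is the genuine zero butterfly and the isomorphism is the desired $\psi$.

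The inverse functor $\Psi$ reverses the construction. Given $(Y, \psi)$, Proposition~\ref{prop:splitting} extracts a map $\varphi : Y \to Z$ satisfying (a) and (b) from the splitting underlying $\psi$; the additional compatibility of $\psi$ with the $i, j, p, q$ of the \emph{specific} zero butterfly (rather than an arbitrary butterfly representable by a morphism of complexes) forces (c) and (d). By (d), $\varphi$ factors through $\ker(p_Z)$, so $p_W$ is well-defined as the induced map, and setting $W$ to have the same central object and $i$, $j$, $q$ as $Y$ yields a butterfly $W : E^\bullet \to \ker(Z)$; the four conditions on $\varphi$ together with the butterfly axioms for $Y$ are precisely the butterfly axioms for $W$. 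That $\Phi$ and $\Psi$ are mutually inverse on objects is then clear from the construction, since both leave the central object and the $i, j, q$ data untouched and simply transport the ``$p$'' datum between $\ker(p_Z)$ and $Z$; compatibility on $2$-morphisms follows similarly, as these are isomorphisms of central objects and the conditions on $p$ transfer across the correspondence. The main technical obstacle lies in the backward direction, in the careful extraction of (c) and (d) from the hypothesis that $\psi$ is an isomorphism to the specific zero butterfly --- this is a refinement of the analysis following Proposition~\ref{prop:splitting}.
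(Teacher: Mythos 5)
Your proposal is correct and follows essentially the same route as the paper: the paper identifies butterflies $E^\bullet \to \ker(Z)$ with the diagrams~\eqref{eqn:40}, whose dashed arrow $\varphi : Y \to Z$ is exactly your map subject to conditions (a)--(d), and then invokes Proposition~\ref{prop:splitting} and the discussion following it to convert such $\varphi$ into trivializations of $Z \circ Y$. You have merely unpacked the paper's diagram into explicit equations and spelled out the verification that (c) and (d) correspond to the vanishing of $\psi^{-1}$ and $\psi^0$, which the paper leaves implicit.
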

\begin{proof}
The groupoid of butterflies $Y : E^\bullet \to \ker(Z)$ is equivalent to the groupoid of diagrams~\eqref{eqn:40} with commutative triangles (and therefore with anticommutative central rhombus), having exact NE-SW diagonals, and whose NW-SE diagonals compose to zero:
\begin{equation} \label{eqn:40} \vcenter{ \xymatrix@C=4em{
&&&& 0  \\
& E^{-1} \ar[rr]^d \ar[dr]^j \ar[dddr]_(.3)0|!{[dd];[dr]}\hole|!{[dd];[ddrr]}\hole && E^0 \ar[ur] \\
&& Y \ar[ur]^q \ar[dr]^p \ar@{-->}[dd]_(.25){-\varphi}^(.75)\varphi \ar[dddr]^(.7)0|!{[dl];[dr]}\hole|!{[dd];[dr]}\hole && 0\\
& F^{-1} \ar[ur]^(.65){i} \ar[dr]_j \ar[rr]|-\hole^(.4){-d}_(.6)d && F^0 \ar[ur] \\
0 \ar[ur] && Z \ar[ur]_(.65)q \ar[dr]^p \\
& G^{-1} \ar[ur]^{i} \ar[rr]^{-d} && G^0 \\
0 \ar[ur]
}} \end{equation}

By Proposition~\ref{prop:splitting} and the discussion following it, such diagrams are equivalent to isomorphisms between $Z \circ Y$ and the zero homomorphism $E^\bullet \to G^\bullet$.
\end{proof}

By passing to the opposite category, the proposition shows the existence of cokernels in $2\mh\mathscr C$.  The cokernel of~\eqref{eqn:33} is the complex~\eqref{eqn:1}, with differential induced from $p$, and the map from $F^\bullet$ induced from $i$:
\begin{equation} \label{eqn:1}
\coker(Y) = [ \coker(j) \xrightarrow{-p} F^0 ]
\end{equation}

\section{Monomorphisms and epimorphisms}

We call a butterfly a \emph{monomorphism} if $Y \circ (-)$ is a fully faithful functor.  Similarly, we call $Y$ an \emph{epimorphism} if $(-) \circ Y$ is fully faithful.  Since butterflies have an additive structure, a morphism is a monomorphism if and only if its kernel is zero.  Likewise, it is an epimorphism if and only if its cokernel is zero.

\begin{proposition}
Let $Y$ be a butterfly~\eqref{eqn:33}.  The following are equivalent:
\begin{enumerate}[label=(\roman{*})]
\item \label{it:mono} $Y$ is a monomorphism;
\item \label{it:ker-0} $\ker(Y) = 0$;
\item \label{it:left-ex} the sequence~\eqref{eqn:54} is exact:
\begin{equation} \label{eqn:54}
0 \to E^{-1} \xrightarrow{j} Y \xrightarrow{p} F^0 
\end{equation}
\end{enumerate}
\end{proposition}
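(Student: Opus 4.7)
The equivalence \ref{it:mono} $\Leftrightarrow$ \ref{it:ker-0} is essentially announced already in the paragraph preceding the proposition: since the $2$-category of $2$-term complexes is additive (via Baer sum), the functor $Y \circ (-)$ is fully faithful if and only if its kernel in each hom-groupoid vanishes, and by the universal property of $\ker(Y)$ established in Section~\ref{sec:kernels}, this is equivalent to $\ker(Y) \simeq 0$. So the real content is the equivalence \ref{it:ker-0} $\Leftrightarrow$ \ref{it:left-ex}.

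My plan is to compute the homology of $\ker(Y) = [E^{-1} \xrightarrow{j} \ker(p)]$ directly. By the proposition of Section~7, a $2$-term complex is isomorphic to the zero complex in the $2$-category if and only if both of its ordinary homology objects vanish (apply that proposition to the unique butterfly $\ker(Y) \to 0$). So I would identify
\begin{equation*}
H^{-1}(\ker(Y)) = \ker\bigl(j : E^{-1} \to \ker(p)\bigr), \qquad H^0(\ker(Y)) = \ker(p)/\image(j).
\end{equation*}
Because $\ker(p) \hookrightarrow Y$, the first group is the same as $\ker(j : E^{-1} \to Y)$; its vanishing says $j$ is injective into $Y$. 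The vanishing of the second group says $\image(j) = \ker(p)$, i.e., the sequence is exact at $Y$. Together, these are precisely the exactness of~\eqref{eqn:54}, establishing \ref{it:ker-0} $\Leftrightarrow$ \ref{it:left-ex}.

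There is no real obstacle here beyond the bookkeeping; the only subtle point is the assertion that a $2$-term complex is equivalent to zero exactly when its ordinary homology vanishes, which is exactly the characterization \ref{it:iso} $\Leftrightarrow$ \ref{it:qis} applied to the tautological butterfly from the complex to $0$.
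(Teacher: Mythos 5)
Your proposal is correct and follows essentially the same route as the paper: the equivalence \ref{it:mono} $\Leftrightarrow$ \ref{it:ker-0} from the additive structure, and \ref{it:ker-0} $\Leftrightarrow$ \ref{it:left-ex} from the explicit formula $\ker(Y) = [E^{-1} \xrightarrow{j} \ker(p)]$ of Section~\ref{sec:kernels}. The paper leaves the second equivalence as a one-line appeal to that formula; your computation of $H^{-1}$ and $H^0$ of the kernel complex, together with the quasi-isomorphism criterion for being equivalent to zero, is exactly the bookkeeping it suppresses.
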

\begin{proof}
The equivalence of~\ref{it:mono} and~\ref{it:ker-0} is immediate from the additive structure on morphisms.  The equivalence of~\ref{it:ker-0} and~\ref{it:left-ex} comes from the explicit formula for kernels computed in Section~\ref{sec:kernels}.
\end{proof}

Reversing arrows gives the dual version.

\section{Pips and copips}
\label{sec:pips}

If $Y : E^\bullet \to F^\bullet$ is butterfly, the pip of $Y$ is $\ker(j) \subset E^{-1}$, positioned in degree $-1$.  Similarly, the copip is $\coker(p)$, positioned in degree~$0$.  The terminology is borrowed from~\cite{dupont}.

We will say that $Y$ is faithful if the functor $Y_\ast : \Hom(D^\bullet, E^\bullet) \to \Hom(D^\bullet, F^\bullet)$ is faithful for all $2$-objects $D^\bullet$.  Likewise, we say $Y$ is cofaithful if $Y^\ast : \Hom(F^\bullet, G^\bullet) \to \Hom(E^\bullet, G^\bullet)$ is faithful for all $2$-objects $G^\bullet$.

\begin{proposition}
Let $Y : E^\bullet \to F^\bullet$ be a butteryfly.  The following are equivalent:
\begin{enumerate}[label=(\roman{*})]
\item \label{it:faithful} $Y$ is faithful;
\item \label{it:pip0} $\pip(Y) = 0$;
\item \label{it:j-inj} $E^{-1} \to Y$ is injective;
\item \label{it:h1-inj} $H^{-1}(E^\bullet) \to H^{-1}(F^\bullet)$ is injective.
\end{enumerate}
\end{proposition}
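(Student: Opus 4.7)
The equivalence (ii) $\Leftrightarrow$ (iii) is immediate from the definition $\pip(Y) = \ker(j : E^{-1} \to Y)$ given in Section~\ref{sec:pips}. For (iii) $\Leftrightarrow$ (iv), the NE-SW exactness of~\eqref{eqn:33} makes $i : F^{-1} \to Y$ injective, and the top triangle gives $qj = d$; hence $\ker(j) \subseteq \ker(d) = H^{-1}(E^\bullet)$, and the formula $Y_\ast = i^{-1} j$ from Section~\ref{sec:homology} identifies $\ker(Y_\ast)$ with $\ker(j) = \pip(Y)$.

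For the equivalence of (i) with the others, the plan is to first classify the $2$-automorphisms of any butterfly $X : D^\bullet \to E^\bullet$. An automorphism $c$ of the middle object commutes with $i_X$ and $q_X$, so $c - 1$ factors through $q_X$ and lands in $\ker(q_X) = \image(i_X)$; since $i_X$ is injective, one writes $c = 1 + i_X \alpha q_X$ for a unique $\alpha : D^0 \to E^{-1}$. Compatibility with $j_X$ (using $q_X j_X = d$) forces $\alpha d = 0$, and compatibility with $p_X$ (using $p_X i_X = -d$) forces $d \alpha = 0$; hence $\alpha$ descends to a morphism $H^0(D^\bullet) \to H^{-1}(E^\bullet)$, and the $2$-automorphism group of $X$ is naturally $\Hom(H^0(D^\bullet), H^{-1}(E^\bullet))$.

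The key step is to verify that, under this identification, post-composition with $Y$ acts by post-composition with $Y_\ast$. The automorphism $c \oplus 1$ of $X \oplus Y$ descends to an automorphism of the homology $Y \circ X$, and tracing through the composition formula from Section~\ref{sec:composition} shows that the class $[(i_X \alpha q_X x, 0)]$ in $Y \circ X$ agrees with $[(0, i_Y \alpha' q_X x)]$ exactly when $j_Y \alpha = i_Y \alpha'$ as maps $D^0 \to Y$; using $q_Y j_Y \alpha = d \alpha = 0$ and the injectivity of $i_Y$ forces $\alpha' = i_Y^{-1} j_Y \alpha = Y_\ast \alpha$. Granted this, (iv) $\Rightarrow$ (i) follows because post-composition with the monomorphism $Y_\ast$ is injective on every Hom-group; and for (i) $\Rightarrow$ (iv) one takes $D^\bullet$ concentrated in degree~$0$ with arbitrary $D^0 = A$ and the zero butterfly $X$, whose automorphism group is $\Hom(A, H^{-1}(E^\bullet))$, so that faithfulness of $Y$ makes post-composition with $Y_\ast$ injective on $\Hom(A, H^{-1}(E^\bullet))$ for every $A$ and hence (by Yoneda) forces $Y_\ast$ itself to be a monomorphism. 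The main obstacle is the descent computation tracking $c \oplus 1$ through the composition; the remainder reduces to a routine application of Yoneda.
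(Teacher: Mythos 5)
Your proof is correct and follows essentially the same route as the paper: (ii)$\Leftrightarrow$(iii) by definition, (iii)$\Leftrightarrow$(iv) by identifying $\ker(j)$ with $\ker(Y_\ast)$ on $H^{-1}$, and (i)$\Leftrightarrow$(iv) by computing the $2$-automorphism groups as $\Hom(H^0(D^\bullet), H^{-1}(E^\bullet))$. You additionally spell out the step the paper leaves implicit --- that under this identification post-composition with $Y$ acts as $\Hom(H^0(D^\bullet), Y_\ast)$ --- and your descent computation for $c \oplus 1$ checks out.
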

\begin{proof}
The equivalence of \ref{it:pip0} and \ref{it:j-inj} is the definition.  Note that by the additive structure of $\Hom(D^\bullet, E^\bullet)$, faithfullness is equivalent to injectivity on the automorphism group of the zero object.  This group works out to the subgroup of $\varphi : D^0 \to E^{-1}$ such that $\varphi d = d \varphi = 0$.  In other words, it is the group of homomorphisms $H^0(D^\bullet) \to H^{-1}(E^\bullet)$.  This gives the equivalence of \ref{it:faithful} and \ref{it:h1-inj}.  Finally, a diagram chase shows that $\pip(Y)$ is the kernel of $H^{-1}(E^\bullet) \to H^{-1}(F^\bullet)$.
\end{proof}

\begin{proposition}
Suppose that $Y : E^\bullet \to F^\bullet$ is a butterfly.  There are canonical isomorphisms:
\begin{gather*}
\coker(\ker(Y)) \xrightarrow{\sim} \ker(\copip(Y)) \\
\coker(\pip(Y)) \xrightarrow{\sim} \ker(\coker(Y))
\end{gather*}
\end{proposition}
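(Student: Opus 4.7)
The plan is to verify both isomorphisms by constructing explicit butterflies whose middle term is the central object $Y$ of the original butterfly, then checking invertibility via the homology criterion.

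First I would unwind the four 2-objects using the formulas from Sections~\ref{sec:kernels} and~\ref{sec:pips}. Applying the cokernel formula~\eqref{eqn:1} to the morphism of complexes $\ker(Y) \to E^\bullet$ (induced by the restriction of $q$ to $\ker p$) yields, after simplification, $\coker(\ker Y) \simeq [\ker p \to E^0]$ with differential induced by $q$; dually, applying the kernel formula~\eqref{eqn:39} to the morphism $F^\bullet \to \copip(Y)$ gives $\ker(\copip Y) \simeq [F^{-1} \to \image p]$ with differential induced by $d_F$. The same calculation on the reversed side produces $\coker(\pip Y) \simeq [\image j \to E^0]$ and $\ker(\coker Y) \simeq [F^{-1} \to \coker j]$, with differentials induced by $q$ and by $i$ respectively.

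Next, for the first isomorphism I would exhibit a butterfly with middle term $Y$ and structural maps given by: the inclusion $\ker p \hookrightarrow Y$ for $j$, the original $q \colon Y \to E^0$, the original $i \colon F^{-1} \to Y$, and the factorization $Y \twoheadrightarrow \image p$ of $p$ for the new $p$. The butterfly axioms then come essentially for free: the NE-SW exact sequence is exactly the original exact sequence $0 \to F^{-1} \to Y \to E^0 \to 0$ from~\eqref{eqn:33}; the relation $pj = 0$ holds because $\ker p \hookrightarrow Y$ dies in $\image p$; and the two commutativity conditions match the source and target differentials computed above, up to the sign conventions of~\eqref{eqn:1} and~\eqref{eqn:39}. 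The second isomorphism is handled in an exactly parallel way, with middle term $Y$ and structural maps given by the inclusion $\image j \hookrightarrow Y$, the quotient $Y \twoheadrightarrow \coker j$, and the original $q$ and $i$; again $pj = 0$ follows because $\image j$ is killed by the quotient $Y \to \coker j$.

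Finally, I would verify these butterflies are isomorphisms using the Invertible morphisms proposition: it suffices to check that the induced maps on $H^{-1}$ and $H^0$ are isomorphisms. A short diagram chase, using the exactness of the NE-SW diagonal and the relations $qj = d_E$, $pi = -d_F$, $pj = 0$, identifies $H^{-1}$ on both sides of the first isomorphism with the subgroup $i^{-1}(\ker p) = \ker d_F$ of $F^{-1}$, and $H^0$ on both sides with $\image(p)/d_F(F^{-1})$; the second isomorphism works symmetrically with $\ker d_E/\ker j$ and $\coker j / \overline{\image i}$. The main obstacle is purely bookkeeping: tracking signs from the $-p$ and $-d$ that appear throughout~\eqref{eqn:33} and~\eqref{eqn:1}, and verifying that the candidate structural maps actually land in the restricted sources and targets $\ker p$, $\image p$, $\image j$, $\coker j$ dictated by the formulas of Section~\ref{sec:kernels} and Section~\ref{sec:pips}.
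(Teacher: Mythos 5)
Your proposal is correct and is essentially the paper's own proof: the paper exhibits exactly these two butterflies with middle term $Y$ (writing $\coimage(j)$ where you write $\image(j)$, the same object) and the same structural maps. The only cosmetic difference is at the end: rather than chasing homology, the paper concludes invertibility directly from criterion~\ref{it:cone} of the invertibility proposition, since the NW--SE diagonals $0 \to \ker(p) \to Y \to \image(p) \to 0$ and $0 \to \coimage(j) \to Y \to \coker(j) \to 0$ are exact by definition, which spares you the diagram chase.
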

\begin{proof}
The maps in question are given by the following butterflies:
\begin{equation*} \xymatrix@!C=15pt{
0 \ar[dr] &&&& 0 \\
& \ker(p) \ar[dr] \ar[rr] && E^0 \ar[ur] \\
&&  Y \ar[ur] \ar[dr] \\
&F^{-1} \ar[ur] \ar[rr] && \image(p) \ar[dr] \\
0 \ar[ur]  &&&& 0
} \hskip4em \xymatrix@!C=15pt{
0 \ar[dr] &&&& 0 \\
& \coimage(j) \ar[dr] \ar[rr] && E^0 \ar[ur] \\
& & Y \ar[ur] \ar[dr] \\
& F^{-1} \ar[ur] \ar[rr] && \coker(j) \ar[dr] \\
0 \ar[ur] &&&& 0
} \end{equation*}
Since the diagonals are exact, these are isomorphisms.
\end{proof}

\section{Images and coimages}
\label{sec:images}

We define the image and coimage:
\begin{gather*}
\image(Y) = \ker(\copip(Y)) \\
\coimage(Y) = \coker(\pip(Y))
\end{gather*}
Note that the image and coimage are not isomorphic in general.  Note also that there is generally no map from $\coker(\ker(Y))$ to $\ker(\coker(Y))$.

\begin{proposition}
Let $Y : F^\bullet \to G^\bullet$ be a butterfly and suppose that the sequence~\eqref{eqn:9} is exact:
\begin{equation} \label{eqn:9}
E^{-1} \xrightarrow j Y \xrightarrow p F^0
\end{equation}
Then there are canonical isomorphisms:
\begin{equation} \label{eqn:18}
\coker(\pip(Y)) \xrightarrow{\sim}  \coker(\ker(Y)) \xrightarrow{\sim} \ker(\coker(Y)) \xrightarrow{\sim} \ker(\copip(Y)) 
\end{equation}
\end{proposition}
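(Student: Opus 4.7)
The plan is to establish each of the three arrows in~\eqref{eqn:18} separately.  The outer two will come from recognizing certain natural morphisms of complexes as quasi-isomorphisms (hence invertible in the $2$-category), while the middle arrow will be obtained by combining the first with the canonical isomorphism furnished by the preceding proposition.

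For the first arrow $\coker(\pip(Y)) \to \coker(\ker(Y))$, I would study the evident morphism of complexes
\[
\pip(Y) = [\ker(j) \to 0] \longrightarrow [E^{-1} \xrightarrow{j} \ker(p)] = \ker(Y)
\]
given by $\ker(j) \hookrightarrow E^{-1}$ in degree $-1$ and the zero map in degree $0$.  It induces the identity on $H^{-1} = \ker(j)$, while on $H^0$ the comparison is $0 \to \ker(p)/\image(j)$, which vanishes by the hypothesis $\ker(p) = \image(j)$.  By the proposition characterising invertible butterflies as quasi-isomorphisms, the morphism is therefore an equivalence in the $2$-category.  Since the composite $\pip(Y) \to \ker(Y) \to E^\bullet$ agrees with the canonical $\pip(Y) \to E^\bullet$, the $2$-functoriality of cokernels supplies the desired isomorphism.

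The third arrow $\ker(\coker(Y)) \to \ker(\copip(Y))$ will be handled dually.  The natural map
\[
\coker(Y) = [\coker(j) \xrightarrow{-p} F^0] \longrightarrow [0 \to \coker(p)] = \copip(Y),
\]
zero in degree $-1$ and the projection $F^0 \twoheadrightarrow \coker(p)$ in degree $0$, is a quasi-isomorphism under the hypothesis: its $H^{-1}$ is $\ker(p)/\image(j) \to 0$, which vanishes, and $H^0$ on both sides is canonically $\coker(p)$.  Invertibility then propagates to the induced map on kernels.

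The middle arrow $\coker(\ker(Y)) \to \ker(\coker(Y))$ will be obtained by composing the inverse of the first isomorphism with the canonical $\coker(\pip(Y)) \xrightarrow{\sim} \ker(\coker(Y))$ of the previous proposition.  I expect the only delicate aspect of this plan to be bookkeeping: confirming that the equivalence $\pip(Y) \to \ker(Y)$ is compatible with the defining triangles so that functoriality of $\coker$ (respectively $\ker$) applies without sign trouble.  An alternative more explicit route for the middle arrow is to construct a butterfly between $\coker(\ker(Y))$ and $\ker(\coker(Y))$ with middle object $Y$, whose invertibility criterion via the exact sequence $0 \to \ker(p) \to Y \to \coker(j) \to 0$ is precisely the hypothesis.
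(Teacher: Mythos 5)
Your argument is correct, but it decomposes the statement differently from the paper. The paper's proof is a single explicit diagram: it identifies the four objects with the complexes $[\coimage(j) \to E^0]$, $[\ker(p) \to E^0]$, $[F^{-1} \to \coker(j)]$, and $[F^{-1} \to \image(p)]$, realizes the two outer arrows as strict morphisms of complexes induced by $\coimage(j) \to \ker(p)$ and $\coker(j) \to \image(p)$ (which are isomorphisms of objects exactly when $\image(j) = \ker(p)$), and realizes the middle arrow as a butterfly with middle object $Y$ itself, invertible by the cone criterion because $0 \to \ker(p) \to Y \to \coker(j) \to 0$ is exact under the hypothesis --- this last is precisely your ``alternative more explicit route.'' You instead produce the outer arrows one level up: you show that the comparison maps $\pip(Y) \to \ker(Y)$ and $\coker(Y) \to \copip(Y)$ are quasi-isomorphisms compatible with the structure maps to $E^\bullet$ and from $F^\bullet$, and then push the equivalence through $\coker$ and $\ker$; and you obtain the middle arrow by composing with the unconditional isomorphism $\coker(\pip(Y)) \xrightarrow{\sim} \ker(\coker(Y))$ of the preceding proposition. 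The two routes rest on the same elementary facts ($\ker(p)/\image(j) = 0$ in two guises), so neither is more general, but the trade-off is real: your version never computes $\coker(\pip(Y))$ or $\ker(\copip(Y))$ explicitly, at the price of invoking $2$-functoriality of kernels and cokernels along equivalences over a fixed object --- a fact the paper has only implicitly, via the universal property of the kernel in Section~\ref{sec:kernels} and its dual, so you should cite that universal property rather than treat functoriality as free. The paper's diagram also hands you all three isomorphisms simultaneously and makes their compatibility visible at a glance, which is the main thing your piecewise assembly gives up.
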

\begin{proof} 
If $Y : E^\bullet \to F^\bullet$ is a butterfly, then its coimage is given by $[ \ker(p) \to E^0 ]$; its image is given by $[ F^{-1} \to \coker(j) ]$.  The isomorphisms above are therefore represented by the following diagram:
\begin{equation} \label{eqn:43} \vcenter{ \xymatrix@!R=10pt{
& \coimage(j) \ar[dd] \ar[rr] & & E^0 \ar[dd] \\
0 \ar[dr] &&&& 0 \\
& \ker(p) \ar[rr] \ar[dr] && E^0 \ar[ur] \\
&& Y \ar[ur] \ar[dr] \\
& F^{-1} \ar[rr] \ar[dd] \ar[ur] & & \coker(j) \ar[dd] \ar[dr] \\
0 \ar[ur] &&&& 0 \\
& F^{-1} \ar[rr] && \image(p)
}} \end{equation}
Since both diagonals are exact, this morphism is invertible.
\end{proof}

\section{Exact sequences}

A sequence of butterflies~\eqref{eqn:49} is called a \emph{complex} if
\begin{equation} \label{eqn:49}
D^\bullet \xrightarrow{d} E^\bullet \xrightarrow{d} F^\bullet \xrightarrow{d} G^\bullet
\end{equation}
if it is equipped with isomorphisms $\varphi : d^2 \simeq 0$ such that $d \varphi = \varphi d = 0$.  More explicitly, the exact sequence~\eqref{eqn:49} is a diagram~\eqref{eqn:71} in which triangles are commutative (so rhombi are anticommutative), NE-SW diagonals are exact, NW-SE diagonals compose to zero, and $\psi \varphi = 0$:
\begin{equation} \label{eqn:71} \vcenter{\xymatrix@C=4em{
&&&& 0 \\
& D^{-1} \ar[rr] \ar[dr]^j \ar[dddr]_(.3)0|!{[dd];[dr]}\hole|!{[dd];[ddrr]}\hole && D^0 \ar[ur] \\
&& X \ar[dd]_(.25){-\varphi}^(.75)\varphi \ar[ur] \ar[dr] \ar[dddr]^(.7)0|!{[dl];[dr]}\hole|!{[dd];[dr]}\hole && 0 \\
& E^{-1} \ar[rr]|-\hole^(.4){-d}_(.6)d \ar[ur] \ar[dddr]_(.3)0|!{[dd];[dr]}\hole|!{[dd];[ddrr]}\hole \ar[dr] && E^0 \ar[ur] \\
0 \ar[ur] && Y \ar[dd]_(.25){-\psi}^(.75)\psi \ar[ur] \ar[dr] \ar[dddr]^(.7)0|!{[dl];[dr]}\hole|!{[dd];[dr]}\hole && 0 \\
& F^{-1} \ar[rr]|-\hole^(.4){-d}_(.6)d \ar[ur] \ar[dr]  && F^0 \ar[ur] \\
0 \ar[ur] && Z \ar[ur]  \ar[dr]_p \\
& G^{-1} \ar[rr] \ar[ur] && G^0 \\
0 \ar[ur]
}} \end{equation}
It is called exact if $d$, $\varphi$, and $\psi$ induce an isomorphism $\coker(D^\bullet \to E^\bullet) \simeq \ker(F^\bullet \to G^\bullet)$.  In other words, the sequence~\eqref{eqn:72} should be exact:
\begin{equation} \label{eqn:72}
0 \to \coker(j) \xrightarrow{\varphi} Y \xrightarrow{\psi} \ker(p) \to 0
\end{equation}

\begin{proposition}
Suppose that~\eqref{eqn:53} is a complex of butterflies:
\begin{equation} \label{eqn:53}
0 \to E^\bullet \xrightarrow{Y} F^\bullet \xrightarrow{Z} G^\bullet \to 0
\end{equation}
\begin{enumerate}[label=(\roman{*})]
\item It is left exact if and only if the sequence~\eqref{eqn:12} is exact:
\begin{equation}
0 \to E^{-1} \xrightarrow{j} Y \xrightarrow{\varphi} Z \xrightarrow{p} G^0 \label{eqn:12} 
\end{equation}
\item It is right exact if and only if the sequence~\eqref{eqn:13} is exact:
\begin{equation}
E^{-1} \xrightarrow{j} Y \xrightarrow{\varphi} Z \xrightarrow{p} G^0 \to 0 \label{eqn:13} 
\end{equation}
\item It is exact if and only if~\eqref{eqn:73} is exact:
\begin{equation}
0 \to E^{-1} \xrightarrow{j} Y \xrightarrow{\varphi} Z \xrightarrow{p} G^0 \to 0 \label{eqn:73}
\end{equation}
\end{enumerate}
\end{proposition}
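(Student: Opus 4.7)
The plan is to view the $5$-term sequence as two overlapping $4$-term subcomplexes and apply the exactness criterion given in the definition preceding this proposition.

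First I would record the data. The canonical butterfly $0 \to E^\bullet$ has middle object $E^{-1}$ with $j = 0$, so its $\coker(j)$ is $E^{-1}$ itself; symmetrically, the butterfly $G^\bullet \to 0$ has middle object $G^0$ with $p = 0$, so its $\ker(p)$ is $G^0$. By Proposition~\ref{prop:splitting}, the $2$-isomorphism $Z \circ Y \simeq 0$ corresponds to a morphism $\varphi : Y \to Z$ satisfying $\varphi j_Y = 0$ and $p_Z \varphi = 0$; thus $\varphi$ corestricts to $\bar\varphi : Y \to \ker(p_Z)$ and factors through $\tilde\varphi : \coker(j_Y) \to Z$.

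For part~(i), I take ``left exact'' to mean that the $4$-term subcomplex $0 \to E^\bullet \to F^\bullet \to G^\bullet$ is exact in the sense of the preceding definition, i.e.\ that the induced morphism $E^\bullet = \coker(0 \to E^\bullet) \to \ker(Z)$ is an isomorphism. Applying~\eqref{eqn:72} to this subcomplex yields the condition
\[
0 \to E^{-1} \xrightarrow{j_Y} Y \xrightarrow{\bar\varphi} \ker(p_Z) \to 0.
\]
Exactness of this three-term sequence is equivalent to exactness of~\eqref{eqn:12}: injectivity of $j_Y$ matches at the left, the identity $\ker(\bar\varphi) = \ker(\varphi)$ handles exactness at $Y$, and surjectivity of $\bar\varphi$ onto $\ker(p_Z)$ is exactly the condition $\image(\varphi) = \ker(p_Z)$.

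Part~(ii) is proved dually: ``right exact'' means that the subcomplex $E^\bullet \to F^\bullet \to G^\bullet \to 0$ is exact, which by~\eqref{eqn:72} translates to $0 \to \coker(j_Y) \xrightarrow{\tilde\varphi} Z \xrightarrow{p_Z} G^0 \to 0$, and thence to~\eqref{eqn:13}. Part~(iii) is simply the conjunction of (i) and (ii), since the two shorter sequences glue along their common middle into the single sequence~\eqref{eqn:73}.

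The main conceptual step is recognizing that the two halves of~\eqref{eqn:73} correspond to the two overlapping $4$-term subcomplexes of the $5$-term sequence; once this is set up, verifying the equivalences between the short exact sequences produced by~\eqref{eqn:72} and the longer sequences~\eqref{eqn:12}, \eqref{eqn:13}, and~\eqref{eqn:73} is purely routine bookkeeping.
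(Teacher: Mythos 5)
Your proposal is correct and follows essentially the same route as the paper: both identify left (resp.\ right) exactness with $E^\bullet \to \ker(Z)$ (resp.\ $\coker(Y) \to G^\bullet$) being an isomorphism, translate this via the explicit kernel/cokernel formulas and the invertibility criterion into the short exact sequences $0 \to E^{-1} \to Y \to \ker(p) \to 0$ and $0 \to \coker(j) \to Z \to G^0 \to 0$, and then unfold these into~\eqref{eqn:12},~\eqref{eqn:13}, and their conjunction~\eqref{eqn:73}.
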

\begin{proof}
Left and right exactness of~\eqref{eqn:53} correspond to the exactness of the following two sequences:
\begin{gather}
0 \to E^\bullet \xrightarrow{Y} F^\bullet \xrightarrow{Z} G^\bullet \label{eqn:76} \\
E^\bullet \xrightarrow{Y} F^\bullet \xrightarrow{Z} G^\bullet \to 0 \label{eqn:77}
\end{gather}
In other words, $E^\bullet \to \ker(Z)$ and $\coker(Y) \to G^\bullet$ should be isomorphisms.  These properties correspond, respectively, to the exactness of the following two sequences:
\begin{gather}
0 \to E^{-1} \to Y \to \ker(p) \to 0 \label{eqn:74} \\
0 \to \coker(j) \to Z \to G^0 \to 0 \label{eqn:75}
\end{gather}
Exactness of each of these sequences is, respectively, equivalent to the exactness of the following two:
\begin{gather*}
0 \to E^{-1} \to Y \to Z \to G^0 \\
E^{-1} \to Y \to Z \to G^0 \to 0
\end{gather*}
These are both exact if and only if~\eqref{eqn:73} is exact, if and only if~\eqref{eqn:74} is exact and $Z \to G^0$ is surjective, if and only if~\eqref{eqn:75} is exact and $E^{-1} \to Y$ is injective.
\end{proof}

\begin{example}
The sequences~\eqref{eqn:51},~\eqref{eqn:10}, and~\eqref{eqn:52} are all exact:
\begin{gather}
0 \to E^{-1} \to E^0 \to E^\bullet \to 0 \label{eqn:51} \\
0 \to E^0 \to E^\bullet \to E^{-1}[1] \to 0 \label{eqn:10} \\
0 \to E^\bullet \to E^{-1}[1] \to E^0[1] \to 0 \label{eqn:52}
\end{gather} 
\end{example}

\begin{proposition}
Suppose that~\eqref{eqn:53} is \begin{enumerate*}[label=(\roman{*})] \item left exact, \item right exact, or \item exact \end{enumerate*}.  Then there is an induced exact sequence~\eqref{eqn:78}:
\begin{equation} \label{eqn:78}
0 \to H^{-1} E^\bullet \to H^{-1} F^\bullet \to H^{-1} G^\bullet \to H^0 E^\bullet \to H^0 F^\bullet \to H^0 G^\bullet \to 0
\end{equation}
\end{proposition}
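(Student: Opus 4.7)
The plan is to construct the six maps of~\eqref{eqn:78} and verify exactness at each spot by diagram chase, leveraging the characterization of (left, right, two-sided) exactness of~\eqref{eqn:53} supplied by the preceding proposition. The functoriality maps $Y_\ast$ and $Z_\ast$ on $H^{-1}$ and $H^0$ come from Section~\ref{sec:homology}; the only new ingredient is the connecting homomorphism $\delta \colon H^{-1}(G^\bullet) \to H^0(E^\bullet)$.

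To define $\delta$, I use that the complex structure on~\eqref{eqn:53} is an isomorphism $Z \circ Y \simeq 0$, which (since the zero butterfly has a split middle term) amounts to a splitting of $0 \to G^{-1} \to Z \circ Y \to E^0 \to 0$, and so by Proposition~\ref{prop:splitting} to a canonical morphism $\varphi \colon Y \to Z$ with $\varphi i_Y = j_Z$ and $q_Z \varphi = -p_Y$. For $\alpha \in H^{-1}(G^\bullet) \subseteq G^{-1}$, the element $i_Z(\alpha) \in Z$ satisfies $p_Z i_Z(\alpha) = -d_G(\alpha) = 0$, so by exactness of $E^{-1} \to Y \to Z \to G^0$ at $Z$---which holds in each of the three cases of the hypothesis---there exists $\beta \in Y$ with $\varphi(\beta) = i_Z(\alpha)$. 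I set $\delta(\alpha) = [q_Y(\beta)]$ in $H^0(E^\bullet) = E^0/d_E(E^{-1})$; two lifts $\beta$ differ by an element of $\ker(\varphi) = j_Y(E^{-1})$ by exactness at $Y$, and $q_Y j_Y = d_E$, so the class is independent of the choice.

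Exactness of~\eqref{eqn:78} at each of its six positions then follows by a diagram chase using the butterfly identities ($qj = d$, $pi = -d$, $pj = 0$, $qi = 0$ in each of $Y$ and $Z$) together with the exactness of $E^{-1} \to Y \to Z \to G^0$ supplied by the preceding proposition. The end-point assertions $H^{-1}E^\bullet \hookrightarrow H^{-1}F^\bullet$ and $H^0F^\bullet \twoheadrightarrow H^0G^\bullet$ reduce respectively to $E^{-1} \hookrightarrow Y$ and $Z \twoheadrightarrow G^0$, which hold exactly when the corresponding terminal $0$ of the previous proposition's sequence is present; in the one-sided cases,~\eqref{eqn:78} should be read with the matching terminal zero dropped.

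The main obstacle is the bookkeeping of six analogous-but-distinct chases; each is short but collectively they demand care. A tidier alternative worth trying first is to package the data as a genuine short exact sequence of two-term complexes in $\mathscr C$---for instance, with $[Y \xrightarrow{\varphi} Z]$ (suitably normalized) placed between $E^\bullet$ and $G^\bullet$---so that a single application of the classical snake lemma delivers~\eqref{eqn:78}.
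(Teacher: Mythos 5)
Your argument is correct and essentially the paper's: your element-wise connecting map (lift $i_Z(\alpha)$ through $\varphi$ and apply $q_Y$) is precisely the map the paper constructs by dividing the butterfly diagram by $E^{-1}$ and taking kernels into $G^0$ (where $\varphi$ induces an isomorphism $\varphi'$ whose inverse effects your lift), and both proofs defer the positional exactness checks to diagram chases resting on the exactness of $E^{-1} \to Y \to Z \to G^0$ supplied by the preceding proposition. Your caveat that a terminal zero of~\eqref{eqn:78} must be dropped in the merely one-sided cases is a sound reading, implicitly confirmed by the paper's own proof, which derives the left third of~\eqref{eqn:78} from left exactness and the right third from right exactness separately.
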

\begin{proof}
We construct the arrow $H^{-1} G^\bullet \to H^0 E^\bullet$ and show the exactness at the $H^{-1} G^\bullet$ and $H^0 E^\bullet$ positions.  Divide diagram~\eqref{eqn:40} by $E^{-1}$  and take kernels into $G^0$.  This preserves exactness and produces the following diagram:
\begin{equation*} \xymatrix{
&&&& 0 \\
& 0 \ar[rr] & & H^0 E^\bullet \ar[ur] \\
&& Y' \ar[dd]_{-\varphi'}^{\varphi'} \ar[ur] \ar[dr] \\
& F^{-1} \ar[ur] \ar[dr] && F^0 \\
&& Z' \ar[ur] \ar[dr] \\
& H^{-1} G^\bullet \ar[ur] \ar[rr] && 0 \\
0 \ar[ur]
} \end{equation*}
The map $\varphi' : Y' \to Z'$ is an isomorphism because it is induced from the exact sequence~\eqref{eqn:73} by dividing by $E^{-1}$ and taking kernels into $G^0$.  Therefore we obtain a map $H^{-1} G^\bullet \to H^0 E^\bullet$.  It is easy to see from a diagram chase that the kernel of this map is the image of $H^{-1} F^\bullet \to H^{-1} G^\bullet$ and (dually) that the cokernel injects into $H^0 F^\bullet$.  

If we instead take kernels into $E^0$, $F^0$, and $G^0$ in the exact sequence~\eqref{eqn:12}, we get the exactness of~\eqref{eqn:14}, and if we divide by $E^{-1}$, $F^{-1}$, and $G^{-1}$ in~\eqref{eqn:13}, we get the exactness of~\eqref{eqn:15}:
\begin{gather}
0 \to H^{-1} E^\bullet \to H^{-1} F^\bullet \to H^{-1} G^\bullet \label{eqn:14} \\
H^0 E^\bullet \to H^0 F^\bullet \to H^0 G^\bullet \to 0 \label{eqn:15}
\end{gather}
\end{proof}

\section{Derived functors}

Assume that $\mathscr C$ has enough injectives.  Then every object of $2\mh\mathscr C$ is isomorphism to an object $E^\bullet$ with $E^{-1}$ injective.  We denote the full subcategory of such objects $\inj(2\mh\mathscr C)$.

Let $\Phi : \mathscr C \to \mathscr D$ be a left exact functor.  We extend $\Phi$ to $\inj(2\mh\mathscr C)$ by applying it objectwise.  If $Y : E^\bullet \to F^\bullet$ is a butterfly, with $F^\bullet \in \inj(2\mh\mathscr C)$ then the sequence~\eqref{eqn:55} is split exact, so~\eqref{eqn:56} is exact as well:
\begin{gather} 
0 \to F^{-1} \to Y \to E^0 \to 0 \label{eqn:55} \\
0 \to \Phi F^{-1} \to \Phi Y \to \Phi E^0 \to 0 \label{eqn:56}
\end{gather}
This means that $\Phi Y$ is a butterfly from $\Phi E^\bullet$ to $\Phi F^\bullet$.  By the equivalence of $2\mh\mathscr C$ with $\inj(2\mh\mathscr C)$ this extends $\Phi$:
\begin{equation*}
\mathrm R^{[0,1]} \Phi : 2\mh\mathscr C \simeq \inj(2\mh\mathscr C) \to 2\mh\mathscr D
\end{equation*}
One may of course define $\mathrm R^{[0,1]} \Phi$ if $\mathscr C$ has enough $\Phi$-acyclic objects.

If $\Phi$ is right exact and $\mathscr C$ has enough projectives, the left derived functors of $\Phi$ can be defined by the same procedure with arrows reversed.

\begin{proposition}
If $\Phi$ is left exact then $\mathrm R^{[0,1]}\Phi$ is also left exact.
\end{proposition}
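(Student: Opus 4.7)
The plan is to leverage the characterization of left exactness from the preceding proposition, which states that a sequence $0 \to E^\bullet \xrightarrow{Y} F^\bullet \xrightarrow{Z} G^\bullet$ of butterflies is left exact if and only if
$$
0 \to E^{-1} \xrightarrow{j} Y \xrightarrow{\varphi} Z \xrightarrow{p} G^0
$$
is exact in $\mathscr C$. Applying this criterion to both the hypothesis and the conclusion reduces the proposition to a direct calculation with the left exact functor $\Phi$ in the underlying abelian categories.

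First I would use the equivalence $2\mh\mathscr C \simeq \inj(2\mh\mathscr C)$ to replace the complexes in a given left exact sequence by isomorphic representatives with injective $(-1)$-terms, so that $E^{-1}$, $F^{-1}$, $G^{-1}$ may all be assumed injective. I would then factor the four-term exact sequence above through $K := \image(\varphi) = \ker(p)$, producing the two short exact sequences
$$
0 \to E^{-1} \to Y \to K \to 0 \qquad \text{and} \qquad 0 \to K \to Z \to G^0.
$$
Because $E^{-1}$ is injective, the first sequence splits, so $\Phi$ preserves its exactness; the second remains left exact by the hypothesis on $\Phi$. Splicing the two resulting sequences in $\mathscr D$ yields the exactness of
$$
0 \to \Phi E^{-1} \to \Phi Y \to \Phi Z \to \Phi G^0.
$$
Since $\mathrm R^{[0,1]}\Phi$ is defined by applying $\Phi$ objectwise to the middle terms of the butterflies, this four-term sequence is precisely the one associated to $\mathrm R^{[0,1]}\Phi(Y)$ and $\mathrm R^{[0,1]}\Phi(Z)$, and the characterization concludes left exactness of the image sequence.

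The main obstacle is that a general left exact functor does not preserve images, so without the injective hypothesis there would be no reason for $\Phi$ to send the surjection $Y \twoheadrightarrow K$ to a surjection, and the splicing step would fail. This is precisely where the resolution by injectives pays off: injectivity of $E^{-1}$ makes the kernel-image factorization of the four-term sequence split off as a direct summand, and $\Phi$ always preserves splittings. The parallel facts that $\Phi Y$ and $\Phi Z$ remain valid butterflies, and that $\Phi$ carries the isomorphism $Z \circ Y \simeq 0$ to an isomorphism $\mathrm R^{[0,1]}\Phi(Z) \circ \mathrm R^{[0,1]}\Phi(Y) \simeq 0$, are built into the construction of $\mathrm R^{[0,1]}\Phi$ via the analogous splittings of the NE-SW diagonals recorded in the definition.
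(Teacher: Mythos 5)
Your proof is correct and follows essentially the same route as the paper, which simply invokes the explicit kernel formula $\ker(Y)=[E^{-1}\to\ker(p)]$: your short exact sequence $0\to E^{-1}\to Y\to K\to 0$ with $K=\ker(p)$ is exactly the paper's criterion~\eqref{eqn:74}, and your two key points (that $\Phi$ preserves $\ker(p)$ by left exactness, and that injectivity of $E^{-1}$ splits the remaining sequence so $\Phi$ preserves it) are precisely what the paper leaves implicit in the word ``immediate.'' Your version just spells out the splicing through the four-term characterization~\eqref{eqn:12}.
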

\begin{proof}
This is immediate from the explicit formula for the kernel.
\end{proof}

\section{Biextensions}

Assume that $\mathscr C$ is a closed symmetric monoidal category, meaning we have adjoint functors:
\begin{gather*}
\otimes : \mathscr C \times \mathscr C \to \mathscr C : (A, B) \mapsto A \otimes B \\
\uHom : \mathscr C^{\rm op} \times \mathscr C \to \mathscr C : (A, B) \mapsto \uHom(A,B)
\end{gather*}
These combine to a tri-functor:
\begin{gather*}
\Bilin : \mathscr C^{\rm op} \times \mathscr C^{\rm op} \times \mathscr C \to \mathbf{Sets} \\
\Bilin(A,B;C) = \Hom(A \otimes B, C) = \Hom(A, \uHom(B, C)) = \Hom(B, \uHom(A,C))
\end{gather*}
We assume that $\otimes$ and $\uHom$ extend to right- and left-exact functors on $2\mh\mathscr C$, respectively.  Then $\Bilin$ also extends to $2\mh\mathscr C$, valued in groupoids.  Combined with the inclusions $\mathscr C \to 2\mh\mathscr C$, we obtain a functor:
\begin{gather*}
\Biext : \mathscr C^{\rm op} \times \mathscr C^{\rm op} \times \mathscr C \to \mathbf{Gpds} \\
\Biext(A,B;C) = \Bilin(A,B;C[1]) = \Hom(A \mathop{\otimes}^{\mathrm L_{[0,1]}} B, C[1]) = \Hom(A, \mathrm R^{[0,1]} \uHom(B, C))
\end{gather*}

When explicit resolutions are available, one can be more explicit about the meaning of a biextension.  For example, suppose that $\mathscr C$ is the category of sheaves of abelian groups on a site $X$.  For each $A \in \mathscr C$, let $\mathbf Z A$ be the sheaf of abelian groups generated by the underlying sheaf of $A$.  We write $\mathbf Z^n A = \mathbf Z(A \times \cdots \times A)$.  Then we have an exact sequence:
\begin{equation*}
\cdots \to \mathbf Z^3 A \to \mathbf Z^2 A \to \mathbf Z A \to A \to 0
\end{equation*}
Resolving $B$ similarly, we have a resolution of $A \mathop{\otimes}^{\mathrm L_{[0,1]}} B$:
\begin{equation*} 
(\mathbf Z^3 A \otimes \mathbf Z B) \oplus ( \mathbf Z^2 A \otimes \mathbf Z^2 B) \oplus (\mathbf Z A \otimes \mathbf Z^3 B) \to (\mathbf Z^2 A \otimes \mathbf Z B) \oplus (\mathbf Z A \otimes \mathbf Z^2 B) \to \mathbf Z A \otimes \mathbf Z B \to A \mathop{\otimes}^{\mathrm L_{[0,1]}} B \to 0
\end{equation*}
A biextension of $A$ and $B$ by $C$ is an extension of $\mathbf Z A \otimes \mathbf Z B$ by $C$ together with a trivialization of that biextension over $(\mathbf Z^2 A \otimes \mathbf Z B) \oplus (\mathbf Z A \otimes \mathbf Z^2 B)$ that restricts to the trivial trivialization over the next term in the sequence.  An extension of $\mathbf Z A \otimes \mathbf Z B = \mathbf Z(A \times B)$ by $C$ is a $C$-torsor $P$ over $A \times B$.  The trivialization over $\mathbf Z^2 A \oplus \mathbf Z B$, together with the trivialization of this trivialization over $\mathbf Z^3 A \otimes \mathbf Z B$, gives the fiber of $P$ over $A \times \{ b \}$ the structure of an extension of $A$ by $C$.  Similarly, the fiber over $\{ a \} \times B$ has the structure of an extension of $B$ by $C$.  Finally, the trivialization over $\mathbf Z^2 A \oplus \mathbf Z^2 B$ asserts that the two addition laws on $P$ commute with one another, where it makes sense:  if $x \in P(a,b)$, $x' \in P(a',b)$, $y \in P(a,b')$ and $y' \in P(a',b')$, then we have
\begin{equation*}
(x \mathop+_{P(A,b)} x') \mathop+_{P(a+a',B)} (y \mathop+_{P(A,b')} y') = (x \mathop+_{P(a,B)} y) \mathop+_{P(A,b+b')} (x' \mathop+_{P(a',B)} y')
\end{equation*}
This recovers the definition of a biextension from \cite[D\'efinition~2.1]{sga7-VII}.

\section{Globalization}

The $2$-objects of $\mathscr C$, with butterflies as morphisms, form a fibered $2$-category over $X$.  For any two fixed $E^\bullet$ and $F^\bullet$ in $\mathscr C(U)$, the morphisms from $E^\bullet$ to $F^\bullet$ form a stack in groupoids.  In many situations, the local $2$-objects of $\mathscr C$ form a stack, but in general we define $2\mhyphen\mathscr C$ to be the associated $2$-stack of this fibered category.  

\begin{proposition}
Let $X$ be a site and let $\mathscr C \to X^{\rm op}$ be a bifibered category with abelian fibers that forms a stack over $X$.  If the fibers of $\mathscr C$ have enough injectives and admit all products then $2\mh\mathscr C$ is a $2$-stack over $X$.
\end{proposition}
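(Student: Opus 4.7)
The plan is to verify that the fibered 2-category of 2-objects with butterflies already satisfies 2-categorical descent, so that the associated 2-stack agrees with it without further sheafification.  There are three layers to check: descent for 2-morphisms, for 1-morphisms (butterflies), and effective descent for objects.  The first two are formal consequences of the stack property of $\mathscr C$; the hypotheses on injectives and products enter only in the third.

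For the morphism layer, fix $E^\bullet, F^\bullet \in 2\mh\mathscr C(U)$.  A butterfly is a single object $Y$ of $\mathscr C(U)$ with maps $i,j,p,q$ subject to the local conditions that the triangles commute, the NE-SW diagonal is exact, and $pj=0$.  Since $\mathscr C$ is a stack, both $Y$ and its structure maps glue from local data, and the stated conditions are local, so they glue as well.  A 2-morphism is a morphism $Y\to Y'$ of $\mathscr C$ intertwining the structure, which likewise satisfies descent.  Thus, for fixed source and target, the groupoid of butterflies forms a stack on $U$.

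For effective descent of objects, consider a covering $\{U_i\to U\}$ with descent datum $(E_i^\bullet, Y_{ij}, \alpha_{ijk})$.  Using enough injectives, I replace each $E_i^\bullet$ by the quasi-isomorphic $\tilde E_i^\bullet = [I_i \to I_i\oplus_{E_i^{-1}} E_i^0]$, where $E_i^{-1}\hookrightarrow I_i$ is an injective embedding.  When the target of a butterfly has injective $(-1)$-term, the SW-NE sequence $0\to F^{-1}\to Y\to E^0\to 0$ splits, and the discussion at the end of Section~2 then represents the butterfly as a morphism of complexes, canonical up to chain homotopy.  The transported descent datum therefore promotes to a descent datum in the strict 2-category of 2-term complexes with morphisms of complexes modulo chain homotopy; since the underlying objects, strict morphisms, and chain homotopies all form sheaves on $U$ by the stack property of $\mathscr C$, this strict descent datum glues termwise to a global complex $\tilde E^\bullet$, which effectively descends the original datum.

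The delicate point, and the only place the product hypothesis enters, is arranging the local injective replacements $\tilde E_i^\bullet$ to be strictly pullback-compatible across overlaps, and correspondingly promoting the homotopy-coherent cocycle $\alpha_{ijk}$ to strict cocycle data.  The standard device is to replace each $I_i$ by a product of injective embeddings indexed over the covering, producing a pullback-compatible family of embeddings; the homotopies witnessing $Y_{ij}\circ Y_{jk}\simeq Y_{ik}$ can then be absorbed into the rigidified strictification.  Once this coherence is arranged, the remaining gluing is termwise descent in $\mathscr C$.
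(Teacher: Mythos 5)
Your reduction of the problem to effective descent of objects is sound and agrees with the paper: since the butterflies between two fixed complexes form a stack, the $2$-stackification is just the category of descent data, and only effectivity of object-level descent data remains to be checked. The gap is in how you discharge that last step. After replacing each $E_i^\bullet$ by a complex with injective degree $-1$ term and representing the $Y_{ij}$ by chain maps, you are left with a cocycle that closes only up to the specified homotopies $\alpha_{ijk}$ (themselves subject to a coherence condition on quadruple overlaps). You then assert that these homotopies ``can be absorbed into the rigidified strictification'' and that the rest is termwise descent in $\mathscr C$. That absorption is the entire difficulty, not a routine afterthought: chain maps modulo chain homotopy do not form a stack (this failure is exactly why butterflies are introduced in the first place), so a descent datum in ``complexes with morphisms modulo homotopy'' cannot simply be glued termwise, and making the injective embeddings pullback-compatible by taking products does not by itself turn the homotopy-coherent cocycle into a strict one. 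As written, the key step is asserted rather than proved.

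The paper avoids strictification altogether. From a descent datum $E^\bullet$ over a covering sieve $R$, it uses the fiberwise injectives to produce, for each $j : U \to X$ in $R$, a map $j^\ast E^\bullet \to E^{-1}[1]$ with $E^{-1}$ injective, hence by adjunction a map $E^\bullet \to j_\ast E^{-1}[1]$; setting $F^{-1} = \prod_{j : U \to X} j_\ast E^{-1}$ (this is where the product hypothesis enters) gives a map from the descent datum into the shift of a single honest global object of $\mathscr C(X)$. The cokernel $Q^\bullet$ of $E^\bullet \to F^{-1}[1]$ is again a descent datum, but the long exact sequence in homology shows it is concentrated in one degree, so it is effective simply because $\mathscr C$ is a stack, yielding a global $F^0$ and an exact sequence exhibiting $E^\bullet \simeq [F^{-1} \to F^0]$. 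In other words, the paper never glues the local complexes term by term; it realizes the descent datum as the ``kernel'' of a map between two global objects of $\mathscr C$. If you want to salvage your route, you would need to actually carry out the strictification of the triple $(E_i^\bullet, Y_{ij}, \alpha_{ijk})$, which in practice amounts to reproducing some version of the paper's global injective embedding anyway.
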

\begin{proof}
Since butterflies from $E^\bullet$ to $F^\bullet$ form a stack, the stackification of $2\mh\mathscr C$ is the the category of descent data for $2\mh\mathscr C$.  Let $R$ be a covering sieve (of $X$, without loss of generality) and let $E^\bullet$ be a descent datum for $2\mh\mathscr C$ over $R$.  We wish to show that $E^\bullet$ is isomorphic to an object of $2\mh\mathscr C$.

Since $\mathscr C(U)$ has enough injectives for all $U$, we can assume that $E^\bullet(U)$ is represented by a complex $E^{-1} \to E^0$ with $E^{-1}$ injective.  This gives a morphism:
\begin{equation*}
j^\ast E^\bullet \simeq E^\bullet(U) \to E^{-1}
\end{equation*}
By adjunction, we obtain a map $E^\bullet \to j_\ast E^{-1}[1]$.  We define $F^{-1} = \prod_{j : U \to X} j_\ast E^{-1}$.  We obtain $E^\bullet \to F^{-1}[1]$.  Let $Q^\bullet$ be the cokernel of $E^\bullet \to F^{-1}[1]$.  A priori, this is another descent datum for $2\mh\mathscr C$, but by the long exact sequence we have $H^0 Q^\bullet = 0$.  Thus $Q^\bullet$ is isomorphic to an object $F^0$ in $\mathscr C[1]$, and we have an exact sequence:
\begin{equation*}
0 \to E^\bullet \to F^{-1} \to F^0 \to 0
\end{equation*}
That is, $E^\bullet \simeq F^\bullet$, as required.
\end{proof}

\bibliographystyle{amsalpha}
\bibliography{2ab}

\end{document}